\def\Date{25.10.2012}%
\numberwithin{equation}{section} \theoremstyle{plain}
\newtheorem{thm}[equation]{Theorem}    
\newtheorem{lem}[equation]{Lemma}      \newtheorem{cor}[equation]{Corollary}
\theoremstyle{definition}
\newtheorem{df}[equation]{Definition}
\newtheorem{rem}[equation]{Remark}
\newtheorem{Thm}{Theorem}
      \def\N{\mathbb{N}}      \def\R{\mathbb{R}}
      \def\mino{\wedge}       \def\maxo{\vee}
                     \def\g{\gamma}
          \def\e{\varepsilon}     \def\k{\kappa}
\def\la{\lambda}                
\def\({\left(}        \def\){\right)}
\newcommand{\rr}{\mathbb R}
\newcommand{\ox}{\overline{x}}
\newcommand{\bx}{\bar{x}}
\newcommand{\bT}{\bar{T}}
\newcommand{\bu}{\bar{u}}
\newcommand{\by}{\bar{y}}
\newcommand{\bz}{\bar{z}}
\newcommand{\bv}{\bar{v}}
\newcommand{\tC}{\widetilde{C}}
\DeclareMathOperator{\len}{len}
\DeclareMathOperator{\dist}{dist}
\def\rcat(#1){rCAT$(#1)$}    
\def\hrcat(#1){rCAT$(#1;*)$} 
\def\Hrcat(#1){rCAT$(#1;H)$} 
\def\HRcat(#1;#2){rCAT$(#1;#2)$} 
\newcommand{\be}{\begin{enumerate}}
\newcommand{\ee}{\end{enumerate}}
\def\rf#1{\@rf{#1}#1:;;}
\def\rfs#1{\@rfs{#1}#1:;;}
\def\rfm#1{\@rfF#1<>;;}
\def\@C{C}\def\@E{E}\def\@F{F}\def\@f{f}\def\@L{L}\def\@O{O}\def\@P{P}
\def\@Q{Q}\def\@R{R}\def\@S{S}\def\@T{T}\def\@X{X}\def\@D{D}\def\@s{s}
\def\@rf#1#2:#3;;{\def\@b{#2}
  \ifx\@b\@C Corollary~\ref{#1}\else%
  \ifx\@b\@E (\ref{#1})\else
  \ifx\@b\@F Fact~\ref{#1}\else%
  \ifx\@b\@f Figure~\ref{#1}\else%
  \ifx\@b\@L Lemma~\ref{#1}\else%
  \ifx\@b\@O Observation~\ref{#1}\else%
  \ifx\@b\@P Proposition~\ref{#1}\else%
  \ifx\@b\@Q Question~\ref{#1}\else%
  \ifx\@b\@R Remark~\ref{#1}\else%
  \ifx\@b\@S Section~\ref{#1}\else%
  \ifx\@b\@T Theorem~\ref{#1}\else%
  \ifx\@b\@X Example~\ref{#1}\else%
  \ifx\@b\@D Definition~\ref{#1}\else%
  \ifx\@b\@s \S\ref{#1}\else
  \ref{#1}\fi\fi\fi\fi\fi\fi\fi\fi\fi\fi\fi\fi\fi\fi}
\def\@rfs#1#2:#3;;{\def\@b{#2}
  \ifx\@b\@C Corollaries~\ref{#1}\else%
  \ifx\@b\@F Facts~\ref{#1}\else%
  \ifx\@b\@f Figures~\ref{#1}\else%
  \ifx\@b\@L Lemmas~\ref{#1}\else%
  \ifx\@b\@O Observations~\ref{#1}\else%
  \ifx\@b\@P Propositions~\ref{#1}\else%
  \ifx\@b\@Q Questions~\ref{#1}\else%
  \ifx\@b\@R Remarks~\ref{#1}\else%
  \ifx\@b\@S Sections~\ref{#1}\else%
  \ifx\@b\@T Theorems~\ref{#1}\else%
  \ifx\@b\@X Examples~\ref{#1}\else%
  \ifx\@b\@D Definitions~\ref{#1}\else
  \ref{#1}\fi\fi\fi\fi\fi\fi\fi\fi\fi\fi\fi\fi}
\def\@rfF<#1>#2;;{\def\@c{#2}
  \@rfs{#1}#1:;;\ifx\@c\empty\else\@rfL:#2;;\fi}
\def\@rfL:#1<#2>#3;;{\def\@b{#2}\def\@c{#3}
  #1\ifx\@b\empty\else\ref{#2}\ifx\@c\empty\else\@rfL:#3;;\fi\fi}
\begin{document}
\title{The $n$-point condition and rough CAT(0)}

\author{S.M. Buckley and B. Hanson}
\address{Department of Mathematics and Statistics, NUI Maynooth, Maynooth,
Co. Kildare, Ireland}%
\email{stephen.buckley@maths.nuim.ie}
\address{Department of Mathematics, Statistics and Computer Science,
St. Olaf College, 1520 St. Olaf Avenue, Northfield, MN 55057, USA}%
\email{hansonb@stolaf.edu}

\abstract {We show that for $n\ge 5$, a length space $(X,d)$ satisfies a
rough $n$-point condition if and only if it is rough CAT(0). }
\endabstract
\date{\Date}
\subjclass[2010]{Primary: 30L05, 51M05}%
\keywords{CAT(0) space, Gromov hyperbolic space, rough CAT(0) space,
Gromov-Hausdorff limit, ultralimit}
\maketitle


\section{Introduction} \label{S:intro}

Gromov hyperbolic spaces and CAT(0) spaces have been intensively studied;
see \cite{cdp}, \cite{gh}, \cite{va}, \cite{bh} and the references therein.
Their respective theories display some common features, notably the
canonical boundary topologies. Rough CAT(0) spaces, a class of length spaces
that properly contains both CAT(0) spaces and those Gromov hyperbolic spaces
that are length spaces, were introduced by the first author and Kurt Falk in
a pair of papers to unify as much as possible of the theories of CAT(0) and
Gromov hyperbolic spaces: the basic ``finite distance'' theory of rough
CAT(0) spaces was developed in \cite{bf1}, and the boundary theory was
developed in \cite{bf2}. As in the earlier papers, we usually write \rcat(0)
in place of rough CAT(0) below. Rough CAT(0) is closely related to the class
of bolic spaces of Kasparov and Skandalis \cite{ks1}, \cite{ks2} that was
introduced in the context of their work on the Baum-Connes and Novikov
Conjectures, and is also related to Gromov's class of CAT(-1,$\e$) spaces
\cite{gr}, \cite{dg}.

One gap in the theory developed so far is the absence of results indicating
that the class of \rcat(0) spaces are closed under reasonably general limit
processes such as pointed and unpointed Gromov-Hausdorff limits and
ultralimits. The purpose of this paper is to fill that gap.

The fact that the CAT(0) class is closed under such limit processes is a
consequence of the following well-known result (for which, see
\cite[II.1.11]{bh}):

\begin{Thm} \label{T:classical}
A complete geodesic metric space $X$ is CAT(0) if and only if it satisfies
the $4$-point condition.
\end{Thm}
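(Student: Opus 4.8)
The plan is to prove the two implications separately. Recall that the $4$-point condition asks that every ordered $4$-tuple $(x_1,x_2,x_3,x_4)$ of points of $X$ admit a comparison quadrilateral $(\bar x_1,\bar x_2,\bar x_3,\bar x_4)$ in $\mathbb{E}^2$ whose four consecutive sides have the same lengths, $|\bar x_i-\bar x_{i+1}|=d(x_i,x_{i+1})$ (indices mod $4$), while its two diagonals are no shorter than the original ones, $|\bar x_1-\bar x_3|\ge d(x_1,x_3)$ and $|\bar x_2-\bar x_4|\ge d(x_2,x_4)$.

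For the implication ``$4$-point condition $\Rightarrow$ CAT(0)'' I would argue that it suffices to establish the vertex-to-opposite-side comparison for an arbitrary geodesic triangle, since by the standard characterization \cite[I.1.7]{bh} this convexity inequality is one of the equivalent formulations of CAT(0). So fix a geodesic triangle with vertices $x_1,x_2,x_3$ and a point $y$ on the side $[x_2,x_3]$, and apply the $4$-point condition to $(x_1,x_2,y,x_3)$. Since $y$ lies on a geodesic, $d(x_2,y)+d(y,x_3)=d(x_2,x_3)$; combined with the triangle inequality $|\bar x_2-\bar x_3|\le|\bar x_2-\bar y|+|\bar y-\bar x_3|=d(x_2,x_3)$ and the diagonal bound $|\bar x_2-\bar x_3|\ge d(x_2,x_3)$, this forces equality, so $\bar y$ lies on the segment $[\bar x_2,\bar x_3]$ at the correct position. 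Hence $(\bar x_1,\bar x_2,\bar x_3)$ is exactly the comparison triangle and $\bar y$ the comparison point of $y$, and the other diagonal bound $d(x_1,y)\le|\bar x_1-\bar y|$ is precisely the desired comparison inequality.

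For the reverse implication ``CAT(0) $\Rightarrow$ $4$-point condition'', the plan is to build the comparison quadrilateral by gluing. Given $(x_1,x_2,x_3,x_4)$, form the comparison triangles of the geodesic triangles on $\{x_1,x_2,x_3\}$ and $\{x_1,x_4,x_3\}$ (using the same geodesic $[x_1,x_3]$) and glue them in $\mathbb{E}^2$ along the common side $[\bar x_1,\bar x_3]$, with $\bar x_2$ and $\bar x_4$ on opposite sides. By construction the four sides are correct and $|\bar x_1-\bar x_3|=d(x_1,x_3)$; it remains to check $|\bar x_2-\bar x_4|\ge d(x_2,x_4)$. When $[\bar x_2,\bar x_4]$ meets $[\bar x_1,\bar x_3]$ at an interior point $\bar m$ (the convex case), let $m$ be the point of the geodesic $[x_1,x_3]$ with $d(x_1,m)=|\bar x_1-\bar m|$; applying the CAT(0) vertex-to-side comparison in each of the two triangles gives $d(x_2,m)\le|\bar x_2-\bar m|$ and $d(m,x_4)\le|\bar m-\bar x_4|$, so that
\[
 d(x_2,x_4)\le d(x_2,m)+d(m,x_4)\le|\bar x_2-\bar m|+|\bar m-\bar x_4|=|\bar x_2-\bar x_4|,
\]
the last equality holding because $\bar m\in[\bar x_2,\bar x_4]$.

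The hard part will be the remaining \emph{non-convex} case, where $[\bar x_2,\bar x_4]$ crosses the line through $\bar x_1,\bar x_3$ outside the segment $[\bar x_1,\bar x_3]$; a short angle-sum computation (the two triangle angles at each of $\bar x_1,\bar x_3$ sum to less than $\pi$) shows this reflex vertex can occur at only one of them, say $\bar x_3$. Here no point of the geodesic corresponds to the crossing, so the direct argument breaks down, and I would invoke Alexandrov's Lemma \cite[I.2.16]{bh} for the two glued comparison triangles: in the regime where the angles at $\bar x_3$ sum to more than $\pi$ it allows one to straighten the configuration while controlling the opposite diagonal, again yielding $|\bar x_2-\bar x_4|\ge d(x_2,x_4)$. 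This is the step where the hypotheses interact most delicately and the one I expect to require the most care; note that, by contrast, the completeness hypothesis plays no role in either implication.
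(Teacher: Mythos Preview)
The paper does not give its own proof of \rf{T:classical}; it is quoted as a well-known result with a pointer to \cite[II.1.11]{bh}, so there is no proof to compare against directly. That said, the implication CAT(0) $\Rightarrow$ $n$-point condition (for every $n\ge 3$, hence in particular $n=4$) \emph{is} established inside the proof of \rf{T:main1} as the ``CAT(0) version'' of the inductive hypothesis. There the non-convex case is handled differently from what you propose: rather than invoking Alexandrov's Lemma, the paper observes that the glued figure $Q_1\sqcup_S Q_2$ is itself CAT(0) (Reshetnyak gluing, \cite[II.11.1]{bh}), deletes the reflex vertex so that the figure becomes a geodesic $(n-1)$-gon in a CAT(0) space, and re-applies the inductive hypothesis to that space. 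This buys a clean, uniform treatment of all $n$ at once, at the cost of importing the gluing theorem; your approach is more elementary but specific to $n=4$.

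Your argument for ``$4$-point $\Rightarrow$ CAT(0)'' is correct and standard; the reference you want is \cite[II.1.7]{bh} rather than I.1.7, and you are right that completeness plays no role once $X$ is assumed geodesic. For ``CAT(0) $\Rightarrow$ $4$-point'' the convex case is fine, but your non-convex step is not yet a proof. Alexandrov's Lemma in the form \cite[I.2.16]{bh} concerns two comparison triangles glued along a side whose far vertices $B,B'$ satisfy a betweenness relation $d(B,C)+d(C,B')=d(B,B')$ at the shared vertex $C$; in your situation there is no such relation among $x_2,x_3,x_4$, so the lemma does not apply verbatim, and ``straightening'' the reflex vertex while keeping all four side lengths fixed does not by itself control \emph{both} diagonals. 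One way to finish is to switch diagonals: glue the comparison triangles of $(x_1,x_2,x_4)$ and $(x_2,x_3,x_4)$ along $[\hat x_2,\hat x_4]$ instead, and show that the reflex condition at $\bar x_3$ in the first gluing forces this second quadrilateral to be convex with $|\hat x_1-\hat x_3|\ge d(x_1,x_3)$; alternatively, follow the paper's Reshetnyak-gluing route. Either way, this step needs more than a citation.
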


In \cite[Theorem~3.18]{bf1}, it was shown that a rough variant of the
$4$-point condition is quantitatively equivalent to a weak version of
\rcat(0), and it follows that the class of weak \rcat(0) spaces is closed
under reasonably general limit processes. However it seems difficult to
decide whether or not all weak \rcat(0) spaces are necessarily \rcat(0). To
establish similar limit closure properties for \rcat(0), we prove the
following rough analogue to \rf{T:classical}; rough $n$-point conditions are
defined in \rf{S:prel}.

\begin{thm} \label{T:main}
Let $(X,d)$ be a length space. If $n \ge 5$ and $(X,d)$ satisfies a
$C$-rough $n$-point condition for some $C\ge0$, then $(X,d)$ is
$C'$-\rcat(0), where $C'=C+2\sqrt 3$. Conversely, if $(X,d)$ is
$C_0$-\rcat(0) for some $C_0>0$, then for all $n\ge 3$, $(X,d)$ satisfies a
$C$-rough $n$-point condition, where $C=(n-2)C_0$.
\end{thm}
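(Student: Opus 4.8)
The plan is to prove the two implications separately, with the converse being essentially bookkeeping and the forward implication (rough $n$-point $\Rightarrow$ \rcat(0)) carrying the real content, as it is the rough analogue of the substantive direction of \rf{T:classical}.

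For the converse, suppose $(X,d)$ is $C_0$-\rcat(0) and fix points $x_1,\dots,x_n$. The idea is to build a Euclidean comparison configuration $\bar x_1,\dots,\bar x_n\in\R^2$ by a fan triangulation based at $x_1$: decompose the abstract $n$-gon on the $x_i$ into the $n-2$ triangles $\Delta(x_1,x_{k+1},x_{k+2})$, $1\le k\le n-2$, take a Euclidean comparison triangle for each, and glue them consecutively along the shared edges $[\bar x_1,\bar x_{k+1}]$ to produce the $\bar x_i$. Each triangle contributes exactly one application of the single-triangle \rcat(0) comparison, and the crux of this direction is to check that the additive error accumulates linearly, one $C_0$ per triangle, so that every required distance comparison holds up to $(n-2)C_0$. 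The case $n=3$ reduces to the exact, errorless embedding of a triangle in the plane, which anchors the count. I expect no serious difficulty here once the triangulation is fixed.

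For the forward implication I would first reduce full \rcat(0) to its defining triangle-comparison statement: given a rough geodesic triangle $\Delta(x,y,z)$ and a point $p$ on the rough geodesic side $[y,z]$, one must show $d(x,p)\le|\bar x-\bar p|+C'$, where $\bar p$ is the comparison point on the genuine Euclidean comparison triangle $\bar\Delta(\bar x,\bar y,\bar z)$. The natural scheme is to feed $x,y,z,p$ --- together with one auxiliary point, say a rough midpoint of $[y,z]$, for a total of five points --- into the $C$-rough $n$-point condition ($n\ge5$), obtaining a planar configuration $\tilde x,\tilde y,\tilde z,\tilde p$ realizing the $X$-distances as permitted by that condition. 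Since $p$ lies on a rough geodesic, $d(y,p)+d(p,z)$ is within $O(C)$ of $d(y,z)$, so $\tilde p$ is forced close to the segment $[\tilde y,\tilde z]$; matching the near-congruent triangles $\tilde\Delta(\tilde x,\tilde y,\tilde z)$ and $\bar\Delta(\bar x,\bar y,\bar z)$ and transporting $\tilde p$ to $\bar p$ should then deliver the inequality. The fifth point is precisely what the rough setting demands and the geodesic one does not: with only four points the same scheme recovers no more than the rough $4$-point condition, which by \cite[Theorem~3.18]{bf1} corresponds to \emph{weak} \rcat(0), and it is the extra rigidity supplied by the fifth point that upgrades the conclusion to full \rcat(0). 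This is what forces the threshold $n\ge5$.

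I expect the main obstacle to be the purely planar estimate behind the last step, and in particular the extraction of the clean, $C$-independent constant $2\sqrt3$. One must quantify how far the comparison configuration furnished by the $n$-point condition can deviate from the exact comparison triangle together with its comparison point $\bar p$, and show that the conversion between these two planar objects costs at most the universal amount $2\sqrt3$, with the roughness $C$ entering only additively and separately. Pinning down the correct auxiliary point and carrying out this sharp planar comparison --- rather than settling for a bound of the form $O(C)$ --- is where I anticipate the real work to lie. A secondary technical point, needed because $X$ is merely a length space, is to pass from finitely many sample points to an arbitrary point $p$ on the side by an approximation argument along the rough geodesic.
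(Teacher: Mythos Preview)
Your forward implication has a genuine gap at the very first step: what you call ``the defining triangle-comparison statement'' of \rcat(0) --- namely $d(x,p)\le|\bar x-\bar p|+C'$ for a vertex $x$ and a single point $p\in[y,z]_h$ --- is precisely the \emph{weak} \rcat(0) condition, not the full one. Full \rcat(0) requires $d(u,v)\le|\bar u-\bar v|+C'$ for points $u,v$ on two \emph{different} sides, neither assumed to be a vertex. Consequently your choice of five points $(x,y,z,p,\text{midpoint of }[y,z])$ is wrong: the auxiliary midpoint lies on the same side as $p$ and buys you nothing toward the two-point comparison. The paper instead applies the rough $5$-point condition to the tuple $(x,u,y,z,v)$ with $u\in[x,y]_h$ and $v\in[x,z]_h$; the point is that both comparison points are already among the five, and the $5$-point subembedding then forces the broken lines $[x',u']\cup[u',y']$ and $[x',v']\cup[v',z']$ to be $h$-short in $\rr^2$, after which Lemmas~\ref{r-uniq-geo} and~\ref{triangle-comparison} convert the subembedding triangle $T(x',y',z')$ into the exact comparison triangle with total error $2\sqrt{3\e}$. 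No approximation along the side is needed.

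Your converse is closer to the paper's argument (which is also a fan-type induction), but you are missing a step that is not mere bookkeeping. When you glue the Euclidean comparison triangles along the spokes from $\bar x_1$, the resulting planar configuration need not be convex: the angle at $\bar x_1$ may exceed $2\pi$, or some outer angle may be reflex. In that case the straight-line Euclidean distance $|\bar x_i-\bar x_j|$ can be strictly smaller than the intrinsic glued distance, and your ``one $C_0$ per triangle along the geodesic'' count no longer bounds $d(x_i,x_j)-|\bar x_i-\bar x_j|$. The paper handles this by (i) carrying a stronger inductive hypothesis in which the comparison inequality holds for \emph{all} points on the sides of the $n$-gon, not just vertices, and (ii) proving simultaneously a CAT(0) version; when the glued $(k+1)$-gon is non-convex it is viewed as a CAT(0) geodesic $k$-gon and re-embedded convexly via the CAT(0) inductive hypothesis. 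Without this re-flattening step the fan construction does not yield a valid $C$-rough subembedding.
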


After some preliminaries in \rf{S:prel}, we prove a pair of preparatory
lemmas in \rf{S:lemmas}. We then prove the main theorem and discuss its
limit closure consequences in \rf{S:proof}.

\section{Preliminaries} \label{S:prel}

Whenever we write $\rr^2$ in this paper, we always mean the plane with the
Euclidean metric attached. Throughout this section, $X$ is a metric space
with metric $d$ attached; any extra assumptions on $d$ will be explicitly
stated.

A \emph{$h$-short segment}, $h\ge0$, in $X$ is a path $\g:[0,L]\to X$,
$L\ge0$, satisfying
$$
\len(\g) \geq d(\g(0),\g(L)) \geq \len(\g) - h.
$$
We denote $h$-short segments connecting points $x,y \in X$ by $[x,y]_h$. It
is convenient to use $[x,y]_h$ also for the image of this path, so instead
of writing $z=\g(t)$ for some $0\le t\le L$, we often write $z\in[x,y]_h$.
Given such a path $\g$ and point $z=\g(t)$, we denote by $[x,z]_h$ and
$[z,y]_h$ respectively the subpaths $\g|_{[0,t]}$ and $\g|_{[t,L]}$,
respectively; note that both of these are $h$-short segments. A $0$-short
segment is called a {\it geodesic segment}, and we write $[x,y]$ in place of
$[x,y]_0$.

A metric space $(X,d)$ is a {\em geodesic space} if for every $x,y\in X$,
there exists at least one geodesic segment $[x,y]$. More generally, $(X,d)$
is a {\it length space} if for every $x,y\in X$ and every $h>0$, there
exists a $h$-short path $[x,y]_h$.

A \emph{$h$-short triangle} $T:=T_h(x_1,x_2,x_3)$ with vertices
$x_1,x_2,x_3\in X$ is a collection of $h$-short segments $[x_1,x_2]_h$,
$[x_2,x_3]_h$, and $[x_3,x_1]_h$ (the {\it sides} of $T$). Given such a
$h$-short triangle $T$, a \emph{comparison triangle} will mean a Euclidean
triangle $\bar{T}:=T(\bx_1,\bx_2,\bx_3)$ in $\rr^2$, such that
$|\bx_i-\bx_j|=d(x_i,x_j)$, $\;i,j\in\{1,2,3\}$. Furthermore, we say that
$\bu\in [\bx_i,\bx_j]$ is a \emph{comparison point} for $u\in [x_i,x_j]_h$,
if
$$
|\bx_i-\bu| \leq \len([x_i,u]_h)  \quad\text{and}\quad|\bu-\bx_j|
\leq \len([u,x_j]_h)\,.
$$

A {\em geodesic triangle} $T=T(x,y,z)$ is just a $0$-short triangle. Note
that in this case if $\bar{T}:=T(\bx_1,\bx_2,\bx_3)$ in $\rr^2$ is a
comparison triangle, and $\bu\in [\bx_i,\bx_j]$ is a \emph{comparison point}
for $u\in [x_i,x_j]$, then $\bu\in [\bx_i,\bx_j]$ is uniquely determined by
the equation $|\bx_i-\bu| = d(u,x_i)$.

A geodesic space $(X,d)$ is a {\it CAT(0) space} if given any geodesic
triangle $T=T(x,y,z)$ with comparison triangle $\bT=\bT(\bx,\by,\bz)$, and
any two points $u \in [x,y]$ and $v \in [x,z]$, we have $d(u,v)\le
|\bu-\bv|$, where $\bu$ and $\bv$ are comparison points for $u$ and $v$.

\begin{df}\label{D:rCAT0*}
Given $C>0$, and a function $H:X\times X\times X\to(0,\infty)$, a length
space $(X,d)$ is said to be a \emph{$C$-\Hrcat(0)} space if the following
\emph{$C$-rough CAT(0) condition} is satisfied:
\begin{equation*}
d(u,v) \leq |\bu - \bv| + C\,,
\end{equation*}
whenever
\begin{itemize}
\item $x,y,z \in X$;%
\item $T:=T_h(x,y,z)$ is a $h$-short triangle, where $h=H(x,y,z)$;%
\item $\bT:=T(\bx,\by,\bz)$ is a comparison triangle in $\rr^2$
    associated with $T$;%
\item $u,v$ lie on different sides of $T$;%
\item $\bu,\bv\in \bT$ are comparison points for $u,v$, respectively;
\end{itemize}
We call $(T_h(x,y,z),u,v)$ the {\it metric space data} and
$(T(\bx,\by,\bz),\bu,\bv)$ the {\it comparison data}.

\end{df}

\begin{df}\label{D:rCAT0}
Given $C>0$, a length space $X$ is \emph{$C$-\hrcat(0)} if there exists
$H:X\times X\times X\to(0,\infty)$ such that $X$ is $C$-\Hrcat(0). $(X,d)$
is \emph{$C$-\rcat(0)} if it is $C$-\Hrcat(0) with
\begin{equation*}
H(x,y,z)=\frac{1}{1\vee d(x,y)\vee d(x,z)\vee d(y,z)} \,.
\end{equation*}
\end{df}

Let us make some remarks about the above definitions. First, every CAT(0)
space is $C$-\rcat(0) and $C'$-\hrcat(0), with $C=2+\sqrt 3$ and $C'>0$
arbitrary; this follows from Theorem~4.5 and Corollary~4.6 of \cite{bf1}.
Trivially $C$-\rcat(0) implies $C$-\hrcat(0). Conversely, $C$-\hrcat(0)
implies $C'$-\rcat(0) for $C':=3C+2+\sqrt 3$; see \cite[Corollary~4.4]{bf1}.

The explicit $H$ in the \rcat(0) condition has proved to be useful, but one
situation where \hrcat(0) is needed is when the parameter $C$ is close to
$0$. In particular, we show in \rf{T:rough 5-pt} that if $(X_n)$ is a
sequence of $C_n$-\hrcat(0) spaces with $C_n\to 0$, then under rather
general conditions the resulting limit space is necessarily CAT(0). {\it A
fortiori}, we could change the $C_n$-\hrcat(0) hypothesis in this result to
$C_n$-\rcat(0), but such a variant is of no real interest since a length
space satisfying a $C$-\rcat(0) condition for $C<1/2$ has diameter at most
$C$ (as a hint, in a space of diameter larger than this, consider a triangle
$T(x,x,x)$ containing a side $[x,x]$ that moves away from $x$ and back
again). In particular, only a one-point space can be $C$-\rcat(0) for all
$C>0$. By contrast, the class of spaces that are $C$-\hrcat(0) for all $C>0$
is quite large: it includes, for instance, all CAT(0) spaces (as mentioned
above), as well as the deleted Euclidean plane.

We now introduce the concept of $C$-rough subembeddings (into $\rr^2$),
which we use to define rough $n$-point conditions.

\begin{df} \label{c-rough-subembedding}
Let $(X,d)$ be a metric space, $C \ge 0$ and $n \ge 3$ be an integer.
Suppose $x_i \in X$ and $\ox_i\in \rr^2$ for $0\le i \le n$, with $x_0=x_n$
and $\ox_0=\ox_n$. We say that $(\ox_1,\ox_2,\dots,\ox_n)$ is a {\em
$C$-rough subembedding} of $(x_1,x_2,\dots,x_n)$ into $\rr^2$ if
\begin{align*}
d(x_i,x_{i-1}) \;&=\;   |\ox_i-\ox_{i-1}|\,, &&1 \le i \le n\,, \\
d(x_1,x_i)     \;&\le\; |\ox_1-\ox_i|\,,     &&2 \le i \le n\,, \qquad
  \text{and} \\
d(x_i,x_j)     \;&\le\; |\ox_i-\ox_j|+C\,,   &&2 \le i,j \le n\,.
\end{align*}
\end{df}

\begin{df} \label{n-point-condition}
Let $n\ge 3$ be an integer.  A metric space $(X,d)$ satisfies the {\em
$C$-rough $n$-point condition}, where $C \ge 0$, if every $n$-tuple in $X$
has a $C$-rough subembedding into $\rr^2$. We say that $X$ satisfies a rough
$n$-point condition if it satisfies a $C$-rough $n$-point condition for some
$C$. The {\it $n$-point condition} is the $0$-rough $n$-point condition.
\end{df}

We note that our notion of a rough $5$-point condition is somewhat analogous
to the {\it mesoscopic curvature} notion of Delzant and Gromov \cite{dg}
which they call CAT${}_\e(\k)$, although that paper is concerned with
$\k<0$, whereas our notion corresponds to $\k=0$.

Before proceeding further, let us discuss these conditions. If we vary just
one of the parameters $C$ and $n$ in the $C$-rough $n$-point condition, it
is easy to see that decreasing $C$ or increasing $n$ gives a stronger
condition; note that to deduce the $C$-rough $(n-1)$-point condition from
the $C$-rough $n$-point condition, we simply take $x_n=x_{n-1}$. The
$3$-point condition is satisfied by all metric spaces.

For geodesic spaces, the $4$-point condition is equivalent to CAT(0); see
\cite[II.1.11]{bh}. For length spaces, a $C$-rough $4$-point condition is
quantitatively equivalent to a weaker version of \rcat(0) in which the
$C$-rough CAT(0) condition is assumed for metric space data
$(T_h(x,y,z),u,v)$ only when $v$ is one of the vertices $x,y,z$; see
\cite[Theorem~3.18]{bf1}. However it seems difficult to decide whether or
not weak \rcat(0) spaces are necessarily \rcat(0). We do not address that
issue in this paper, but we will show that, among length spaces, \rcat(0) is
quantitatively equivalent to a rough $n$-point condition for $n>4$. Thus the
class of weak \rcat(0) spaces coincides with the class of length spaces
satisfying a rough $4$-point condition, and the class of \rcat(0) spaces
coincides with the class of length spaces satisfying an $n$-point condition
for any value (or all values) of $n>4$, but we cannot say whether or not a
rough $4$-point condition implies a rough $n$-point condition for $n>4$.

\section{Two lemmas} \label{S:lemmas}

The proof of \rf{T:main} requires the following two lemmas. The first is a
restatement of \cite[Lemma 3.12]{bf1}.

\begin{lem}\label{r-uniq-geo}
Let $x,y$ be a pair of points in the Euclidean plane $\rr^2$, with
$l:=|x-y|>0$. Fixing $h>0$, and writing $L:=l+h$, let $\g:[0,L]\to \rr^2$ be
a $h$-short segment from $x$ to $y$, parameterized by arclength. Then there
exists a map $\la:[0,L]\to[x,y]$ such that $\la(0)=x$, $\la(L)=y$, and
\begin{alignat*}{2}
|\la(t)-x|&\le |\g(t)-x|\,, \qquad & 0\le t\le L\,, \\
|\la(t)-y|&\le |\g(t)-y|\,, \qquad & 0\le t\le L\,, \\
\delta(t):=\dist(\g(t),\la(t)) &\le M := \frac{1}{2}\sqrt{2lh+h^2}\,,
  \qquad & 0\le t\le L\,.
\end{alignat*}
In particular if $h=\e/(1\maxo l)$ for some $0<\e\le 1$, then $\delta(t)\le
\sqrt{3\e}/2$ for all $0\le t\le L$.
\end{lem}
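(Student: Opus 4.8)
The plan is to exploit the elementary fact that a path in $\rr^2$ of length $L=l+h$ joining $x$ to $y$ must stay inside the closed ellipse with foci $x,y$ and major axis $L$, and to take $\la$ to be nearest-point projection onto $[x,y]$. First I would normalize coordinates so that $x=(0,0)$ and $y=(l,0)$, so that $[x,y]$ is the segment of the first axis from $0$ to $l$. Since $\g$ is parameterized by arclength, for each $t$ the straight-line distances satisfy $|\g(t)-x|\le t$ and $|\g(t)-y|\le L-t$, whence $|\g(t)-x|+|\g(t)-y|\le L$. Thus every point $\g(t)$ lies in the closed elliptical region $E$ with foci $x,y$ and sum-of-focal-distances $L$; a direct computation shows that the semi-minor axis of $E$ equals $\tfrac12\sqrt{L^2-l^2}=\tfrac12\sqrt{2lh+h^2}=M$.

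Next I would define $\la(t)$ to be the nearest point of $[x,y]$ to $\g(t)$ (the orthogonal projection, clamped to the endpoints). Since $\g(0)=x$ and $\g(L)=y$ project to themselves, $\la(0)=x$ and $\la(L)=y$. The two radial inequalities are immediate from the standard fact that nearest-point projection onto a convex set is non-expansive towards any point of that set: applying this with the set $[x,y]$ and the points $x,y\in[x,y]$ gives $|\la(t)-x|\le|\g(t)-x|$ and $|\la(t)-y|\le|\g(t)-y|$. It remains to bound $\d(t)=\dist(\g(t),\la(t))=\dist(\g(t),[x,y])$ by $M$.

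For the distance bound, write $\g(t)=(u,v)$. If $0\le u\le l$ the projection is interior and $\d(t)=|v|$, which is at most the maximal half-height $M$ of the ellipse. If $u<0$ (the case $u>l$ being symmetric), the nearest point is $x$ and $\d(t)=|\g(t)-x|$; here I would note that among points of $E$ with fixed first coordinate the farthest from $x$ lies on the boundary ellipse, and the focal-radius formula shows this boundary distance is increasing in $u$, so it is maximized over $u\le 0$ at $u=0$, where its value is readily checked to be at most $M$. This endpoint case, confirming that projecting to a vertex still keeps $\d(t)\le M$ rather than merely $\d(t)\le h$, is the main point requiring care, since the crude bound coming from the triangle inequality alone is too weak when $l$ is small.

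Finally, the \emph{in particular} statement follows by substituting $h=\e/(1\maxo l)$ into $M^2=\tfrac14(2lh+h^2)$ and treating the cases $l\ge 1$ and $l<1$ separately: in each case one obtains $M^2\le 3\e/4$, and hence $\d(t)\le M\le\sqrt{3\e}/2$.
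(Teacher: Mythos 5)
Your proof is correct. There is, however, no in-paper argument to compare it with: the paper states this lemma only as ``a restatement of \cite[Lemma 3.12]{bf1}'' and supplies no proof, so your write-up fills in what the authors delegate to the reference. Your route is the natural one and all the steps check out: arclength parameterization gives $|\g(t)-x|+|\g(t)-y|\le L$, so $\g$ stays in the closed ellipse with foci $x,y$ and semi-minor axis $\tfrac12\sqrt{L^2-l^2}=M$; taking $\la(t)$ to be the nearest point of $[x,y]$ to $\g(t)$ yields the two radial inequalities by non-expansiveness of projection onto a convex set; the interior case of the distance bound is just the semi-minor axis; and you correctly isolate and resolve the one delicate point, the cap $u\le 0$, where the maximum of $|\cdot-x|$ over the cap is attained on the boundary arc at $u=0$ with value $b^2/a=2M^2/L\le M$ (since $2M\le L$ is equivalent to $0\le l^2$). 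The final substitution $h=\e/(1\maxo l)$ with the two cases $l\ge 1$ and $l<1$ gives $M^2\le 3\e/4$ as claimed.
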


\begin{lem} \label{triangle-comparison}
Assume $x_i,x'_i \in \rr^2$ for $i=0,1,2$, with $u_i \in [x_0,x_i]$ and
$u'_i \in [x'_0,x'_i]$ for $i=1,2$ and let %
\begin{equation*}
h=\frac{\e}{1 \vee |x_0'-x_1'| \vee |x_0'-x_2'|}\,,
\end{equation*}
for some $0<\e\le 1$. Suppose further that %
\begin{alignat*}{2}
|x_1-x_2|                  &\;=\;   |x'_1-x'_2|\,, & \\
|x'_0-x'_i| \le |x_0-x_i|  &\;\le\; |x'_0-x'_i|+h, \qquad &i=1,2\,. \\
\intertext{and} %
\frac{|u_i-x_0|}{|x_0-x_i|} &\;=\;  \frac{|u'_i-x'_0|}{|x'_0-x'_i|}\,,
  \qquad &i=1,2\,. %
\end{alignat*} %
Then $|u_1-u_2| \le |u'_1-u'_2|+\sqrt{3\e}\,.$
\end{lem}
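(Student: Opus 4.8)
The plan is to base coordinates at the two apex vertices $x_0$ and $x'_0$ and reduce the whole comparison to the law of cosines, using crucially that the opposite side length $c := |x_1 - x_2| = |x'_1 - x'_2|$ is shared by the two triangles. Writing $a := |x_0 - x_1|$, $b := |x_0 - x_2|$, $a' := |x'_0 - x'_1|$, $b' := |x'_0 - x'_2|$, and letting $t_i := |u_i - x_0|/|x_0 - x_i| = |u'_i - x'_0|/|x'_0 - x'_i| \in [0,1]$ denote the common division ratios furnished by the third hypothesis, I would write $u_i = x_0 + t_i(x_i - x_0)$ and $u'_i = x'_0 + t_i(x'_i - x'_0)$. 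Expanding $|u_1 - u_2|^2 = t_1^2 a^2 + t_2^2 b^2 - 2 t_1 t_2 \langle x_1 - x_0, x_2 - x_0 \rangle$ and replacing the inner product by $\tfrac12(a^2 + b^2 - c^2)$ via polarization gives a formula for $|u_1 - u_2|^2$ in terms of $a, b, c, t_1, t_2$, and the same computation in the primed plane (with the same $c$) gives the analogous formula in terms of $a', b', c, t_1, t_2$.

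The point of keeping $c$ fixed is that upon subtracting the two squared distances the $c^2$-terms cancel, and I expect to arrive at the clean identity
\[
|u_1 - u_2|^2 - |u'_1 - u'_2|^2 = P t_1^2 + Q t_2^2 - (P+Q) t_1 t_2,
\]
where $P := a^2 - (a')^2$ and $Q := b^2 - (b')^2$. The middle hypothesis $|x'_0 - x'_i| \le |x_0 - x_i| \le |x'_0 - x'_i| + h$ shows first that $P, Q \ge 0$, and second that $P \le 2a'h + h^2$ and $Q \le 2b'h + h^2$. The definition of $h$ then forces $P \le 3\e$ and $Q \le 3\e$: this is precisely the estimate $2lh + h^2 \le 3\e$ (taken with $l = a'$ and $l = b'$ respectively) underlying the final assertion of Lemma~\ref{r-uniq-geo}.

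It remains to maximize $f(t_1, t_2) = P t_1^2 + Q t_2^2 - (P+Q) t_1 t_2$ over the unit square $[0,1]^2$. Since $P \ge 0$ and $Q \ge 0$, $f$ is convex in $t_1$ for fixed $t_2$ and convex in $t_2$ for fixed $t_1$, so its maximum is attained at one of the four corners; evaluating there yields the values $0, P, Q, 0$, so that $f \le \max(P, Q) \le 3\e$. Hence $|u_1 - u_2|^2 \le |u'_1 - u'_2|^2 + 3\e$, and since $(|u'_1 - u'_2| + \sqrt{3\e})^2 \ge |u'_1 - u'_2|^2 + 3\e$, taking square roots gives the claimed bound $|u_1 - u_2| \le |u'_1 - u'_2| + \sqrt{3\e}$.

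The step I expect to be the real crux is the cancellation in the subtraction: a priori, lengthening both apex sides while pinning the opposite side could move $u_1$ and $u_2$ apart in a way not obviously controlled by $\sqrt{3\e}$, but the shared value of $c^2$ collapses all the cross terms into the single symmetric quadratic above. The remaining subtlety is that its maximum over the parameter square is governed by $\max(P,Q)$ rather than $P+Q$; it is exactly this corner analysis, rather than a crude application of the triangle inequality, that produces the sharp constant $\sqrt{3\e}$ instead of a weaker $\sqrt{6\e}$.
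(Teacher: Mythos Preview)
Your proof is correct and follows essentially the same route as the paper's: both compute $|u_1-u_2|^2-|u'_1-u'_2|^2$ as a quadratic in the division ratios with coefficients $P=a^2-(a')^2$ and $Q=b^2-(b')^2$, then bound this by $3\e$ via $2lh+h^2\le 3\e$. The only cosmetic differences are that the paper derives the distance formula by two applications of the parallelogram identity \eqref{parallelogram} rather than direct inner-product expansion, and handles the optimization by assuming $s\le t$ and discarding the resulting nonpositive term rather than by your separate-convexity corner argument; both arrive at the same bound $\max(P,Q)$.
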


\begin{proof}
Set %
\begin{equation*}
s=\frac{|u_1-x_0|}{|x_1-x_0|}=\frac{|u_1'-x_1'|}{|x_1'-x_0'|}
\end{equation*}
and %
\begin{equation*}
t=\frac{|u_2-x_0|}{|x_2-x_0|}=\frac{|u_2'-x_0'|}{|x_2'-x_0'|}.
\end{equation*} %
We assume without loss of generality that $s \le t$. An elementary
calculation using the parallelogram law shows that given $x,y,z$ in the
Euclidean plane with $w \in [y,z]$ and $|w-y|=r|z-y|$ we have
\begin{equation} \label{parallelogram}
|x-w|^2=(1-r)|x-y|^2+r|x-z|^2-r(1-r)|y-z|^2\,.
\end{equation}
%
%
Using (\ref{parallelogram}) twice, we get %
\begin{equation} \label{u_1-u_2}
|u_1-u_2|^2=st|x_1-x_2|^2+t^2\(1-\frac{s}t\)|x_0-x_2|^2-st\(1-\frac{s}t\)|x_0-x_1|^2
\end{equation} and similarly%
\begin{equation} \label{u_1'-u_2'} %
|u_1'-u_2'|^2 =
  st|x_1'-x_2'|^2+t^2\(1-\frac{s}t\)|x_0'-x_2'|^2-st\(1-\frac{s}t\)|x_0'-x_1'|^2.
\end{equation}

Setting $|u_1-u_2|=|u_1'-u_2'|+d$ and subtracting (\ref{u_1'-u_2'}) from
(\ref{u_1-u_2}), we get
\begin{align*}
2d|u_1'-u_2'|+d^2 &= t^2\(1-\frac{s}t\)
  \(|x_0-x_2|^2-|x_0'-x_2'|^2\)-\\
&-st\(1-\frac{s}t\)\(|x_0-x_1|^2-|x_0'-x_1'|^2\) \\
&\le t^2\(1-\frac{s}t\)\(|x_0-x_2|^2-|x_0'-x_2'|^2\)\\
&\le t^2\(1-\frac{s}t\)\(2h|x_0'-x_2'|+h^2\) \le 3\e.
\end{align*}
In particular $d\le\sqrt{3\e}$, as required.
\end{proof}

\section{Proof and consequences} \label{S:proof}

Here we prove \rf{T:main} and discuss some consequences. First we need a
definition.

\begin{df} \label{gluing}
Suppose $(S,d_S)$ is a metric space, and that for $i=1,2$, we have a metric
space $(X_i,d_i)$, a closed subspace $S_i\subset X_i$, and a surjective
isometry $f_i:S\rightarrow S_i$. We then define the gluing of $X_1$ and
$X_2$ along $S_1,S_2$ (denoted by $X=X_1 \sqcup_S X_2$) as the quotient of
the disjoint union of $X_1$ and $X_2$ under the identification of $f_1(s)$
with $f_2(s)$ for each $s \in S$. The glued metric $d$ on $X$ is defined by
the equations $d|_{X_i\times X_i}=d_i, i=1,2$ and
$$
d(x_1,x_2) = \inf_{s \in S}\,(d_1(x_1,f_1(s))+d_2(f_2(s),x_2))\,, \qquad
  x_1 \in X_1,\; x_2 \in X_2.
$$
\end{df}

We note the following easily verified facts about $(X,d):=X_1 \sqcup_S X_2$
defined by gluing as above:
\begin{itemize}
\item $d$ restricted to $X_i$, $i=1,2$, coincides with $d_i$;%
\item every geodesic segment in $X_i$, $i=1,2$, is also a geodesic
    segment in $X$.
\end{itemize}

We now prove the following slight improvement of \rf{T:main}.

\begin{thm} \label{T:main1}
Let $(X,d)$ be a length space. If $n\ge 5$ and $(X,d)$ satisfies a $C$-rough
$n$-point condition for some $C\ge 0$, then $(X,d)$ is $C'$-\rcat(0) and
$C''$-\hrcat(0), where $C'=C+2\sqrt 3$ and $C''>C$ is arbitrary. Conversely,
if $(X,d)$ is $C_0$-\hrcat(0) for some $C_0>0$, then for all $n \ge 3$,
$(X,d)$ satisfies a $C$-rough $n$-point condition, where $C=(n-2)C_0$.
\end{thm}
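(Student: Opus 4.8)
The plan is to prove the two implications of Theorem~\ref{T:main1} separately, with the forward direction (rough $n$-point condition $\Rightarrow$ \rcat(0)) as the substantial one. For the forward direction, fix a triple $x,y,z$, an $h$-short triangle $T_h(x,y,z)$ with $h=H(x,y,z)$, points $u,v$ on two of its sides, and comparison points $\bu,\bv$. Since any two sides meet at a vertex, after relabelling I may assume $u\in[x,y]_h$ and $v\in[x,z]_h$. The key move is to feed the five points $x,u,y,z,v$, in this cyclic order, to the $C$-rough $5$-point condition; this is precisely where $n\ge5$ is needed, since we must record all three vertices together with the two interior points $u,v$. Writing $\overline{x},\overline{u},\overline{y},\overline{z},\overline{v}$ for a resulting $C$-rough subembedding into $\rr^2$, the consecutive-distance equalities give $|\overline{x}-\overline{u}|=d(x,u)$, $|\overline{u}-\overline{y}|=d(u,y)$, $|\overline{y}-\overline{z}|=d(y,z)$, $|\overline{z}-\overline{v}|=d(v,z)$ and $|\overline{v}-\overline{x}|=d(x,v)$; the lower bounds give $d(x,y)\le|\overline{x}-\overline{y}|$ and $d(x,z)\le|\overline{x}-\overline{z}|$; and, since $u,v$ sit at indices $2$ and $5$, the last family yields the crucial $d(u,v)\le|\overline{u}-\overline{v}|+C$.

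It then remains to bound $|\overline{u}-\overline{v}|$ by $|\bu-\bv|+2\sqrt3$. First I would note that the planar path $\overline{x}\to\overline{u}\to\overline{y}$ has length $d(x,u)+d(u,y)\le\len([x,u]_h)+\len([u,y]_h)=\len([x,y]_h)$, whereas $|\overline{x}-\overline{y}|\ge d(x,y)\ge\len([x,y]_h)-h$, so this path exceeds the straight segment $[\overline{x},\overline{y}]$ by at most $h$, and likewise for $\overline{x}\to\overline{v}\to\overline{z}$. Consequently the planar triangle $\overline{x}\,\overline{y}\,\overline{z}$ has its two sides at $\overline{x}$ within $h$ of $d(x,y),d(x,z)$ and its third side equal to $d(y,z)$, while $\overline{u},\overline{v}$ lie within $\sqrt3/2$ of $[\overline{x},\overline{y}],[\overline{x},\overline{z}]$ by the final clause of Lemma~\ref{r-uniq-geo}. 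Projecting $\overline{u},\overline{v}$ onto those sides (costing $2\cdot\sqrt3/2=\sqrt3$) and then comparing this slightly-too-long triangle with the genuine comparison triangle $\bT$ via Lemma~\ref{triangle-comparison} (costing a further $\sqrt3$), with the projected points placed at the proportions of $\bu,\bv$, gives $|\overline{u}-\overline{v}|\le|\bu-\bv|+2\sqrt3$, whence $d(u,v)\le|\bu-\bv|+C+2\sqrt3$, i.e.\ $C'$-\rcat(0). For the $C''$-\hrcat(0) statement I would run the identical argument but choose $H(x,y,z)$ so small that the two error terms from Lemmas~\ref{r-uniq-geo} and~\ref{triangle-comparison}, both of which tend to $0$ with $h$, sum to less than the prescribed $C''-C>0$; this is exactly where the freedom in the auxiliary function $H$ lets us discard the $2\sqrt3$.

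For the converse, given $x_1,\dots,x_n$ I would construct the planar image as a fan of Euclidean comparison triangles: for $2\le i\le n-1$ let $\overline{x}_1\overline{x}_i\overline{x}_{i+1}$ be a comparison triangle for $(x_1,x_i,x_{i+1})$, and glue consecutive triangles along the radial edges $[\overline{x}_1,\overline{x}_i]$, rotating in one direction about $\overline{x}_1$. Every consecutive distance and every radial distance $|\overline{x}_1-\overline{x}_i|=d(x_1,x_i)$ is then reproduced exactly, so the first two families of subembedding inequalities hold automatically. The content is the bound $d(x_i,x_j)\le|\overline{x}_i-\overline{x}_j|+(j-i-1)C_0\le(n-2)C_0$ for $2\le i<j\le n$, which I would prove by induction on $j-i$: the base case $j=i+1$ is the exact outer edge, and each step absorbs one further fan triangle by applying the $C_0$-\rcat(0) inequality to the triangle on $x_1$ and two consecutive radials, which, via the planar law of cosines, amounts to a rough subadditivity of the comparison angles at $\overline{x}_1$; since each step costs at most $C_0$ and at most $n-2$ triangles are involved, the constant $C=(n-2)C_0$ follows.

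The main obstacle is the bookkeeping of constants in the forward direction: one must align the planar points $\overline{u},\overline{v}$ and their projections with the \emph{given} comparison points $\bu,\bv$ (which are not unique, ranging over an interval of length $\le h$ on each side) so that the projection error from Lemma~\ref{r-uniq-geo} and the triangle-distortion error from Lemma~\ref{triangle-comparison} combine to exactly $2\sqrt3$ rather than to something larger, and this is where the two lemmas' constants $\sqrt3/2$ and $\sqrt3$ must be shown to add up cleanly. A secondary nuisance appears in the converse: if the accumulated fan angle at $\overline{x}_1$ exceeds $\pi$, the chord $|\overline{x}_i-\overline{x}_j|$ no longer increases with the angle, so this degenerate configuration must be excluded or treated separately before the law-of-cosines comparison is invoked.
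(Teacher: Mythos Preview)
Your forward direction is essentially the paper's argument: apply the rough $5$-point condition to $(x,u,y,z,v)$, use Lemma~\ref{r-uniq-geo} to push $\overline u,\overline v$ onto the sides of the planar triangle $\overline x\,\overline y\,\overline z$, and then compare that triangle with $\bT$ via Lemma~\ref{triangle-comparison}, picking up $\sqrt{3\e}$ at each of the two steps; $\e=1$ gives the $C'$-\rcat(0) constant and $\e\to0$ gives $C''$-\hrcat(0). One organizational difference: the paper chooses the comparison points $\bu,\bv$ \emph{after} projecting, at the same proportions as the projections $u'',v''$, and then checks that these are legitimate comparison points. Doing it in that order makes Lemma~\ref{triangle-comparison} apply on the nose and dissolves what you flag as the ``main obstacle''.

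Your converse, however, has a genuine gap exactly at what you call a ``secondary nuisance''. The fan of comparison triangles about $\overline{x}_1$ need not have total angle $\le\pi$; in fact nothing prevents the angle from exceeding $2\pi$, so the fan need not even embed in $\rr^2$. Once the angle subtended by $\overline{x}_i,\overline{x}_j$ passes $\pi$, the law-of-cosines monotonicity fails and rough angle subadditivity no longer controls $|\overline{x}_i-\overline{x}_j|$ from below. This is not a degenerate edge case one can exclude: generic configurations (e.g.\ points in a tree) routinely produce large fan angles.

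The paper deals with this by a different and more elaborate induction. It carries a stronger hypothesis $A_n$ that (i) controls distances between \emph{all} points of an $h$-short $n$-gon $P$ via a constant-speed polygon map $F:Q\to P$ from a convex Euclidean $n$-gon $Q$, and (ii) proves, in parallel, the exact CAT(0) statement ($h=0$, constant $0$) for geodesic $n$-gons in arbitrary CAT(0) spaces. The step from $k$ to $k+1$ splits off a single triangle, glues the two convex Euclidean pieces $Q_1,Q_2$ along their common edge into a CAT(0) space $Q$, and verifies $A_{k+1}$ for the glued map. If $Q$ happens to be convex one is done; if not, the reflex vertex is deleted (its two edges form a geodesic), $Q$ is viewed as a geodesic $k$-gon in a CAT(0) space, and the \emph{CAT(0)} inductive hypothesis is applied to $Q$ to produce a genuinely convex Euclidean majorant $R$ together with a nonexpanding map $G:R\to Q$; the composite $F\circ G$ then does the job. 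This convexification step, and the need to run the CAT(0) version simultaneously so that it is available for $Q$, is the idea your outline is missing.
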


\begin{proof}
Assume that $(X,d)$ is a length space.  We first prove the forward
implication, so we assume that $n\ge 5$ and that $(X,d)$ satisfies a
$C$-rough $n$-point condition for some $C \ge 0$.  It follows trivially that
$(X,d)$ satisfies a $C$-rough 5-point condition.  Let $T:=T_h(x,y,z)$ be a
$h$-short geodesic triangle in $X$, where
\begin{equation}\label{E:h bounde}
h = H(x,y,z):= \frac{\e}{1\vee d(x,y)\vee d(x,z)\vee d(y,z)} \, ,
\end{equation}
and $0<\e\le 1$ is fixed but arbitrary. Assume also that $u\in [x,y]_h$ and
$v\in [x,z]_h$. Let $(x',u',y',z',v')$ be a $C$-rough subembedding of
$(x_1,x_2,x_3,x_4,x_5)=(x,u,y,z,v)$ into $\rr^2$, so in particular we have
$$ d(x,y)\le|x'-y'|\,,\qquad d(x,z)\le|x'-z'|\,, \qquad d(y,z)=|y'-z'|\,, $$
and
\begin{equation}
d(u,v) \;\le\; |u'-v'|+C. \label{ineq:uv}
\end{equation}
From the definition of a $C$-rough subembedding and the fact that $T$ is
$h$-short, it follows that the piecewise linear paths $\g_1=[x',u']\cup
[u',y']$ and $\g_2=[x',v']\cup [v',z']$ are both $h$-short.  Thus, by Lemma
\ref{r-uniq-geo} we can choose $u''\in [x',y']$ and $v''\in [x',z']$ such
that
\begin{equation} \label{u'v'}
|u'-u''|\le \frac{\sqrt{3\e}}2 \quad \text{ and } \quad |v'-v''|\le
  \frac{\sqrt{3\e}}2
\end{equation}
and such that
\begin{equation} \label{u''}
|u''-x'| \le |u'-x'| \quad \text{ and } \quad |u''-y'| \le |u'-y'|
\end{equation}
and
\begin{equation} \label{v''}
|v''-x'| \le |v'-x'| \quad \text{ and } \quad |v''-z'| \le |v'-z'|.
\end{equation}

Now let $\bar{T}=T(\bx,\by,\bz)$ be a comparison triangle for $T$ and choose
$\bu \in [\bx,\by], \bv \in [\bx,\bz]$ satisfying:
\begin{equation} \label{uv-ratio}
\frac{|\bu-\bx|}{|\bx-\by|}=\frac{|u''-x'|}{|x'-y'|} \quad\text{ and }\quad
\frac{|\bv-\bx|}{|\bx-\bz|}=\frac{|v''-x'|}{|x'-z'|}.
\end{equation}
Since $|\bx-\by|=d(x,y)\le |x'-y'|$, it follows from (\ref{u''}) and
(\ref{uv-ratio}) that
$$ |\bu-\bx| \le |u''-x'| \le |u'-x'| $$
and
$$ |\bu-\by| \le |u''-y'| \le |u'-y'|\,, $$
so $\bu$ is a comparison point for $u$. Similarly $\bv$ is a comparison
point for $v$.  Finally, using (\ref{ineq:uv}) and (\ref{u'v'}), we see
that
$$ d(u,v) \le |u'-v'|+C \le |u''-v''|+C+\sqrt{3\e}\,, $$
and so by Lemma \ref{triangle-comparison}, we get
$$ d(u,v) \le |\bu-\bv|+C+2\sqrt{3\e}\,. $$
Thus $(X,d)$ is $C'$-\hrcat(0), with $C'=C+2\sqrt{3\e}$. Taking $\e=1$, we
see that $X$ is $C'$-\rcat(0), where $C'=C+2\sqrt 3$. Letting $\e>0$ be
sufficiently small, we see that $X$ is $C''$-\hrcat(0).

We next proceed with the reverse implication, so let us assume that $(X,d)$
is $C'$-\hrcat(0). We will prove that $(X,d)$ satisfies the $C_n$-rough
$n$-point condition, where $C_n:=(n-2)C'$ and $n\ge 3$.

The proof will involve induction, but using a stronger inductive hypothesis
which involves not just a set of $n$ points, but an $n$-gon with these
points as vertices. Additionally, the inductive process requires us to
establish simultaneously a CAT(0) version of the result. Note that it
suffices to prove the result for sets of distinct points, since the desired
conditions for $n$ points with at least one repeated point follows
immediately from the condition for $n-1$ points.

Given $u_1,u_2,\dots,u_n \in X$, $n\ge 3$, we say that $P$ is a $h$-short
$n$-gon (with vertices $u_1,u_2,\dots,u_n=u_0$) if $P$ is the union of
$h$-short paths $[u_{i-1},u_i]_h$ for $i=1,2,\dots,n$. An $n$-gon is {\it
geodesic} if it is $0$-short. We say that $h$ is {\it suitably small} if
$h<H(u_i,u_j,u_k)$ for all $1\le i,j,k\le n$.

Suppose
\begin{itemize}
\item $Q$ is a geodesic $n$-gon with distinct vertices $(v_i)_{i=1}^n$
    and associated metric $d'$;%
\item $P$ a $h$-short $n$-gon with distinct vertices $(u_i)_{i=1}^n$ and
    associated metric $d$;%
\item $F:Q\to P$ is a map with $F(v_i)=u_i$, $1\le i\le n$.
\end{itemize}
Since a geodesic segment is isometrically equivalent to a segment on $\R$,
we can view the restriction of $F$ to a single side of $Q$ as being a path,
and hence define the path length $\len(F;x,y)$ to be the length of the
associated path segment from $F(x)$ to $F(y)$. We call $F:Q\to P$ a {\it
constant speed $n$-gon map} if $P,Q,F$ are as above, and if for each $1\le
i\le n$ there is a constant $K_i$ such that $\len(F;x,y)=K_id'(x,y)$
whenever $x,y\in [v_{i-1},v_i]$. It is easy to see that, given any $P,Q$ as
above, a constant speed $n$-gon map always exists.

Given the following data:
\begin{itemize}
\item a $h$-short $n$-gon $P$ with distinct vertices $u_1,u_2,\dots,u_n
    \in X$, where $(X,d)$ is a metric space and $h$ is suitably small;
    \\[-6pt]
\item a constant speed $n$-gon map $F:Q\to P$, where $Q$ is a geodesic
    $n$-gon with distinct vertices $v_1,v_2,\dots,v_n\in Y$, and
    $(Y,d')$ is a CAT(0) space,
\end{itemize}
we define a hypothesis $A_n(P,h;F,Q,d',C_n)$:
\begin{alignat}{3}
u_i            &\;=\;   F(v_i)\,,          &&\quad 1\le i\le n\,,\notag\\
d(u_{i-1},u_i) &\;=\;   d'(v_{i-1},v_i)\,, &&\quad 1\le i\le n\,,\notag\\
d(u_1,u_i)     &\;\le\; d'(v_1,v_i)\,,     &&\quad 2\le i\le n\,,\notag\\
\len([F(x),u_i]_h) &\;\ge\; d'(x,v_i)\,,
  &&\quad x \in Q,\;\; v_i \text{ a vertex adjacent to } x\,,
  \label{E:vertex-length}\\
d(F(x),F(y)) &\;\le\; d'(x,y)+C_n\,,       &&\quad x,y\in Q\,.
 \label{E:d(x,y)}
\end{alignat}
{\it The inductive hypothesis for $n$} is that for all $P,h$ as above, there
exist data $(F,Q,d')$ such that $A_n(P,h;F,Q,d',C_n)$ holds, and such that
$(Q,d')$ is a convex Euclidean $n$-gon in $\rr^2$ with $d'$ being the
Euclidean metric. This implies the desired $C_n$-rough $n$-point embedding:
the vertices of $Q$ give the rough subembedding of the vertices of $P$. We
have defined the hypothesis $A_n(P,h;F,Q,d',C_n)$ in the more general
context of a CAT(0) space $Y$ because we will need this along the way.

The CAT(0) version of our inductive hypothesis for $n$ is that for all
geodesic $n$-gons $P$ as above, there exist data $(F,Q,d')$ such that
$A_n(P,0;F,Q,d',0)$ holds, and such that $(Q,d')$ is a convex Euclidean
$n$-gon in $\rr^2$ with $d'$ being the Euclidean metric. Note also that with
$h=0$ and $C_n=0$ we get equality in \rf{E:vertex-length}, and \rf{E:d(x,y)} simplifies
to
\begin{equation} \label{E:d(x,y) CAT}
d(F(x),F(y)) \le |x-y|\,.
\end{equation}

It is a routine task to use the $C'$-\rcat(0) condition to verify the
inductive hypothesis for $n=3$ (and CAT(0) to verify the CAT(0) variant of
the inductive hypothesis for $n=3$), so assume that it holds for $n=k\ge 3$.
Let $P$ be a given $h$-short $(k+1)$-gon, where $h$ is sufficiently
small. We draw a $h$-short path from $u_1$ to $u_k$ that splits $P$ into a
$h$-short $k$-gon $P_1$ with vertices $u_1,\dots, u_k$, and a $h$-short
triangle $P_2$ with vertices $u_1, u_k, u_{k+1}$. Let $F_i: Q_i \rightarrow
P_i$, $i=1,2$ be the maps guaranteed by our inductive hypothesis for $n=k$
and the easy case $n=3$, where $Q_1$ is a convex $k$-gon with vertices
$v_1,v_2,\dots,v_k \in \rr^2$ and $Q_2$ is a triangle with vertices
$v_1,v_k,v_{k+1}$. By use of isometries of $\rr^2$, we may assume that the
sides from $v_1$ to $v_k$ in $Q_1$ and in $Q_2$ are the same, and that $Q_1$
and $Q_2$ are on opposite sides of this line segment (so the interiors of
$Q_1$ and $Q_2$ are disjoint).

We now let $(Q,d')$ be the metric space formed by gluing $Q_1$ and $Q_2$
together along $S=[v_1,v_k]$, so $Q'=Q_1 \sqcup_S Q_2$.  Let $Q$ be the
$(k+1)$-gon with vertices $v_1,v_2,\dots,v_{k+1}$ and define $F:P\rightarrow
Q$ by
\begin{displaymath}
   F(x) = \begin{cases}
       F_1(x)\,, &x \in Q_1 \cap Q\,,\\
       F_2(x)\,, &x \in Q_2 \cap Q\,.
     \end{cases}
\end{displaymath}

Note that the fact that each $F_i$ is a constant speed map ensures that $F$ is well-defined.

We wish to prove $A_{k+1}(P,h;F,Q,d',C_{k+1})$. In view of the construction,
it suffices to verify \rf{E:d(x,y)}, and for this we may assume that $x \in
Q_1$ and $y \in Q_2$.  Let $\gamma$ be the geodesic in $Q$ connecting $x$ to $y$.  It follows that
$\gamma=[x,v] \cup [v,y]$, where $v \in [v_1,v_k]$.

Using \rf{E:d(x,y)} for $P_1$ and $P_2$ and the definition of the gluing metric $d'$ on $Q$, we thus get
\begin{align*}
d(F(x),F(y)) &\le d(F(x),F(v))+d(F(v),F(y)) \\
&\le |x-v|+C_k+|v-y|+C_3 \\
&=   d'(x,y)+C_{k+1}.
\end{align*}

Essentially the same argument allows us to deduce $A_{k+1}(P,0;F,Q,d',0)$
from the CAT(0) version of our inductive hypothesis.

If $Q$ happens to be convex, we are done with the proof so assume that $Q$ is not convex.  Then
the interior angle at either $v_1$ or $v_k$ exceeds $\pi$.  Assume without loss of generality that
the interior angle at $v_1$ is larger than $\pi$.  Now the union of the two geodesic segments
$[v_{k+1},v_1]$ and $[v_1,v_2]$ is also a geodesic segment, and so by
eliminating $v_1$ as a vertex, we may consider $Q$ to be a geodesic $k$-gon
with vertices $v_2,v_3,\dots,v_{k+1}$. We also note that $Q$ is CAT(0) since
$Q_1$ and $Q_2$ are CAT(0) and the gluing set $[v_1,v_k]$ is convex; see
\cite[II.11.1]{bh}.  Applying the CAT(0) version of our induction assumption
to $Q$, we get a map $G: R \rightarrow Q$, where $R$ is a convex $k$-gon in
$\rr^2$ with vertices $w_2,w_3,\dots,w_{k+1}$ satisfying:
\begin{alignat*}{3}
v_i           &\;=\;    G(w_i)\,,        \qquad &&2\le i\le k+1\,,  \\
|v_{i-1}-v_i| &\;=\;    |w_{i-1}-w_i|\,, \qquad &&3\le i\le k+1\,, \\
|v_2-v_{k+1}| &\;\le\;  |w_2-w_{k+1}|\,, && \\
|G(y)-v_i|    &\;=\;    |y-w_i|\,, \qquad
  &&\text{whenever } y \in R,\;\; w_i \text{ a vertex adjacent to } y\,, \\
|G(y)-G(z)|   &\;\le\;  |y-z|\,, \qquad &&y,z\in Q\,.
\end{alignat*}
We now view $R$ as a convex $(k+1)$-gon by identifying $G^{-1}(v_1)$ as an
extra vertex (with interior angle $\pi$). Then $F \circ G$ is the desired
mapping (for both the \rcat(0) and CAT(0) variants of our inductive
hypothesis). Thus we have established the inductive hypothesis for $n=k+1$
and we are done with the proof.
\end{proof}

For completeness we state a CAT(0) variant of \rf{T:main}.

\begin{thm} \label{T:main0}
A complete geodesic space $(X,d)$ satisfies the $n$-point condition for
fixed $n\ge 4$ if and only if it is CAT(0).
\end{thm}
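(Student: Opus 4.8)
The plan is to reduce both directions to results already in hand, since Theorem~\ref{T:main0} is essentially the $C=0$ shadow of \rf{T:main1}. For the ``only if'' direction I would first invoke the monotonicity of the $n$-point condition: setting $x_n=x_{n-1}$ shows that the $n$-point condition implies the $(n-1)$-point condition, so for fixed $n\ge 4$ it implies the $4$-point condition. As $(X,d)$ is a complete geodesic space, \rf{T:classical} then yields that $X$ is CAT(0), and this reasoning is uniform in $n\ge 4$. (The restriction $n\ge 4$ is genuine, since the $3$-point condition holds in every metric space.)

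For the ``if'' direction I would appeal to the CAT(0) variant of the induction carried out in the proof of \rf{T:main1}. That argument shows that for every geodesic $n$-gon $P$ in a CAT(0) space there exist data $(F,Q,d')$ for which $A_n(P,0;F,Q,d',0)$ holds, with $(Q,d')$ a convex Euclidean $n$-gon in $\rr^2$; in particular $d(F(x),F(y))\le|x-y|$ for all $x,y\in Q$. Reading off the vertices of $Q$ then gives a $0$-rough subembedding of the vertices of $P$ into $\rr^2$. Since a CAT(0) space is geodesic, after the routine reduction to distinct points (the repeated-point case following from the $(n-1)$-point condition) every $n$-tuple is the vertex set of some geodesic $n$-gon, so $X$ satisfies the $n$-point condition for all $n\ge 3$, in particular for the prescribed $n$. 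Note that this half needs no completeness.

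Combining the two directions gives the equivalence for each fixed $n\ge 4$. I expect the monotonicity reduction and the extraction of the subembedding from $Q$ to be entirely routine, so that the real content is carried by the two imported ingredients: \rf{T:classical} for the forward direction and the CAT(0) half of the induction in \rf{T:main1} for the reverse. Were one instead to prove the reverse direction from scratch, the principal obstacle would be precisely that inductive step: gluing the comparison polygons $Q_1,Q_2$ along a shared side, verifying \rf{E:d(x,y) CAT} via the gluing metric, and, when the resulting polygon $Q$ fails to be convex, reapplying the hypothesis after suppressing the vertex whose interior angle exceeds $\pi$, using that the glued space stays CAT(0) because the gluing segment is convex.
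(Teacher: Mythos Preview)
Your proposal is correct and matches the paper's own proof essentially verbatim: the forward direction reduces to the $4$-point condition via monotonicity and then invokes \rf{T:classical}, while the reverse direction appeals to the CAT(0) variant of the inductive hypothesis established in the proof of \rf{T:main1}. Your additional remarks on the reduction to distinct points and on what a self-contained proof of the reverse direction would entail are accurate but go beyond what the paper records.
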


\begin{proof}
Since \rf{T:classical} already tells us that the $4$-point condition is
equivalent to CAT(0), it suffices to prove that CAT(0) implies the $n$-point
condition for each $n>4$. But this follows from the CAT(0) version of our
inductive hypothesis which was established in the proof for all $n\in\N$.
\end{proof}

\begin{rem}
By examining the above proof, we see that if $X$ is $C$-\hrcat(0), then $X$
is $C'$-\HRcat(0;H') with $C'=3C+2\sqrt{3\e}$, $0<\e\le 1$, and
\begin{equation}\label{E:He}
H'(x,y,z):= \frac{\e}{1\vee d(x,y)\vee d(x,z)\vee d(y,z)} \,.
\end{equation}
Taking $\e=1$, this slightly strengthens \cite[Corollary~4.4]{bf1} which
states that $C$-\hrcat(0) implies $C'$-\rcat(0) for $C':=3C+2+\sqrt 3$. Also
interesting is the case $\e=1\mino (C^2/3)$: this shows that the
$C$-\Hrcat(0) condition with arbitrary $H$ implies the $(5C)$-\HRcat(0;H')
condition with the explicit $H'$ given by \rf{E:He}.
\end{rem}

As mentioned in the Introduction, CAT(0) is preserved by various limit
operations, including pointed Gromov-Hausdorff limits and ultralimits
\cite[II.3.10]{bh}. The trick is to use the $4$-point condition and the
concept of a $4$-point limit. A very similar argument, with the $4$-point
condition replaced by our rough $5$-point condition, will give us similar
results for \rcat(0) spaces. We begin with a definition of $n$-point limits.

\begin{df}
A metric space $(X,d)$ is an $n$-point limit of a sequence of metric spaces
$(X_m,d_m)$, $m\in\N$, if for every $\{x_i\}_{i=1}^n\subset X$, and $\e>0$,
there exist infinitely many integers $m$ and points $x_i(m)\in X_m$, $1\le
i\le n$, such that $|d(x_i,x_j)-d_m(x_i(m),x_j(m))|<\e$ for $1\le i,j\le n$.
\end{df}

We are now ready to state a $5$-point limit result. Note that, since any
$n$-point limit of $((X_m,d_m))_{m=1}^\infty$ $(X,d)$ is also $n'$-point
limit of this sequence of spaces for all $n'\le n$, the following result
also holds if $5$ is replaced by any larger integer. The proof of this
result, which is very similar to the corresponding result for CAT(0) and
4-point limits given in \cite[II.3.9]{bh}, is included for completeness.

\begin{thm}\label{T:rough 5-pt}
Suppose the length space $(X,d)$ is a $5$-point limit of $(X_m,d_m)$,
$m\in\N$, where $(X_m,d_m)$ is $C_m$-\hrcat(0) for some constant $C_m$. If
$C_m\le C$ for all $m\in\N$, then $(X,d)$ is a $\tC$-\rcat(0) space, where
$\tC=3C+2\sqrt 3$. If $C_m\to 0$, and $(X,d)$ is complete, then $(X,d)$ is a
CAT(0) space.
\end{thm}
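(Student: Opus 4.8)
The plan is to reduce the whole statement to the rough $5$-point condition via \rf{T:main1} and then push that condition through the limit, exactly as in the classical treatment of CAT(0) and $4$-point limits in \cite[II.3.9]{bh}. First I would apply the reverse implication of \rf{T:main1} with $n=5$ to each $(X_m,d_m)$: since $X_m$ is $C_m$-\hrcat(0), it satisfies a $3C_m$-rough $5$-point condition. When $C_m\le C$ for all $m$, monotonicity in the constant (a $3C_m$-rough subembedding is a fortiori $3C$-rough because $3C_m\le 3C$) shows that each $X_m$ satisfies the weaker $3C$-rough $5$-point condition.

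The crux is to transfer this condition to $X$. Fix five points $x_1,\dots,x_5\in X$ and $\eta>0$. Using the $5$-point limit hypothesis, choose (infinitely many) indices $m$ and points $x_i(m)\in X_m$ with $|d(x_i,x_j)-d_m(x_i(m),x_j(m))|<\eta$, and let $(\ox_i(m))_{i=1}^5$ be a $3C$-rough subembedding of $(x_i(m))_{i=1}^5$ into $\rr^2$. After normalizing by a Euclidean isometry (translate so $\ox_1(m)=0$ and rotate so $\ox_2(m)$ lies on a fixed ray) the points stay in a bounded region, since the consecutive distances $|\ox_i(m)-\ox_{i-1}(m)|=d_m(x_i(m),x_{i-1}(m))\le d(x_i,x_{i-1})+\eta$ are uniformly bounded. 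By Bolzano--Weierstrass I pass to a subsequence along which $\ox_i(m)\to\ox_i^\eta\in\rr^2$; taking the limit in each defining (in)equality of the subembedding leaves at most an $\eta$-sized error in every constraint. Letting $\eta\to0$ and extracting one further convergent subsequence then yields an honest $3C$-rough subembedding of $(x_1,\dots,x_5)$. Thus $X$ satisfies the $3C$-rough $5$-point condition, and since $X$ is a length space, the forward implication of \rf{T:main1} (with $n=5$) gives that $X$ is $\tC$-\rcat(0) with $\tC=3C+2\sqrt3$.

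For the second assertion I run the same transfer, now exploiting $C_m\to0$: for each $\eta>0$ I additionally require the chosen indices $m$ to satisfy $3C_m<\eta$ (possible since $C_m\to0$ and infinitely many good indices are available), so the limiting constraints carry only $O(\eta)$ slack and, after $\eta\to0$, give the exact ($0$-rough) $5$-point condition for $X$; in particular $X$ satisfies the $4$-point condition (take $x_5=x_4$). It then remains to upgrade the complete length space $X$ to a geodesic space, after which \rf{T:main0} immediately yields CAT(0). To produce geodesics I use the approximate-midpoint argument: given $x,y\in X$, call $z$ a $\delta$-approximate midpoint if $\max\{d(x,z),d(z,y)\}\le\tfrac12 d(x,y)+\delta$, and apply the $4$-point condition to the cycle $(x,z,y,z')$ for two such points $z,z'$. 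In the comparison quadrilateral $(\ox_1,\ox_2,\ox_3,\ox_4)$ the diagonal satisfies $|\ox_1-\ox_3|\ge d(x,y)$ while $|\ox_1-\ox_2|+|\ox_2-\ox_3|=d(x,z)+d(z,y)\le d(x,y)+2\delta$, so the comparison triangle is nearly degenerate and $\ox_2$ (and likewise $\ox_4$) lies within $O(\sqrt{\delta})$ of the midpoint of $[\ox_1,\ox_3]$. Since the $4$-point condition bounds $d(z,z')\le|\ox_2-\ox_4|$, approximate midpoints form a Cauchy net; by completeness they converge to a genuine midpoint, and the standard dyadic construction then produces a geodesic between any two points.

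The step I expect to be the main obstacle is establishing that $X$ is geodesic in the $C_m\to0$ case: unlike the first part, \rf{T:main0} requires a genuine geodesic space, so the heart of the matter is the quantitative midpoint-convergence estimate, where the $4$-point comparison must be used carefully to force approximate midpoints to be Cauchy.
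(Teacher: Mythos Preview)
Your proof is correct and follows essentially the same route as the paper: apply the reverse implication of \rf{T:main1} to get a $3C_m$-rough $5$-point condition on each $X_m$, pass the subembeddings through the limit by a compactness argument in $\rr^2$, and then invoke the forward implication of \rf{T:main1}. The only notable difference is in the $C_m\to0$ case: where the paper simply cites \cite[II.1.11]{bh} (which directly gives CAT(0) from completeness, approximate midpoints, and the $4$-point condition), you instead reprove that result by hand---using the $4$-point comparison to show approximate midpoints are Cauchy, upgrading $X$ to a geodesic space, and then appealing to \rf{T:main0}. This is a valid and more self-contained alternative, though it is exactly the content of the cited lemma rather than a genuinely different idea.
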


\begin{proof}
Suppose first that $C_m\le C$ for all $m\in\N$. Let $(x_i)_{i=1}^5$ be an
arbitrary $5$-tuple of points in $(X,d)$, and suppose that it is the
$5$-point limit of the $5$-tuples $(x_i(m))_{i=1}^5$ in $X_m$.  By passing
to a subsequence if necessary, we may assume that $d(x_i(m),x_j(m))\to
d(x_i,x_j)$ for all $1\le i,j\le 5$.

By \rf{T:main1}, every $(X_m,d_m)$ satisfies a $C'$-rough $5$-point
condition, where $C':=3C$, so there exists a $C'$-rough subembedding
$(\ox_1(m),\ox_2(m),\dots,\ox_5(m))$ of $(x_1(m),x_2(m),\dots,x_5(m))$ into
$\rr^2$, for each $m\in\N$. Since translation is an isometry in $\rr^2$, we
may assume that the points $\ox_1(m)$ coincide for all $m\in\N$. Thus all
$5$-tuples are contained in a disk of finite radius and by passing to a
subsequence if necessary we may assume that $\ox_i(m)$ converges to some
point $\ox_i$ as $m\to\infty$, for all $1\le i\le m$. It follows readily
that $(\ox_i)_{i=1}^5$ is a $C'$-rough subembedding of $(x_i)_{i=1}^5$ in
$\rr^2$. Thus $(X,d)$ satisfies the $C'$-rough $5$-point condition. By again
using \rf{T:main1}, we deduce that $(X,d)$ is a $\tC$-\rcat(0) space where
$\tC=C'+2\sqrt 3$.

If in fact $C_m\to 0$, then $(X_m,d_m)$ satisfies a $(3C_m)$-rough $5$-point
condition, and it follows as above that $(X,d)$ satisfies the $0$-rough
$5$-point condition, and hence the $4$-point condition. This together with
completeness and approximate midpoints (as follows from the fact that
$(X,d)$ is a length space) implies that $(X,d)$ is a CAT(0) space: see
\cite[II.1.11]{bh}.
\end{proof}

With \rf{T:rough 5-pt} in hand, it is now routine to deduce the following
corollary.

\begin{cor}\label{C:limit}
Suppose $(X,d)$ is a length space and suppose $(X_m,d_m)$, $m\in\N$, form a
sequence of $C$-\rcat(0) spaces. Writing $\tC=3C+2\sqrt 3$, the following
results hold.
\begin{enumerate}[(a)]
\item If $(X,d)$ is a (pointed or unpointed) Gromov-Hausdorff limit of
    $(X_m,d_m)$ then $(X,d)$ is a $\tC$-\rcat(0) space.
\item If $(X,d)$ is an ultralimit of $(X_m,d_m)$, then $(X,d)$ is a
    $\tC$-\rcat(0) space.
\item If $X$ is \rcat(0), then the asymptotic cone $\text{Cone}_\omega
    X:=\lim_\omega (X,d/m)$ is a CAT(0) space for every non-principal
    ultrafilter $\omega$.
\end{enumerate}
\end{cor}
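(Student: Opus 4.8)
The plan is to derive all three parts directly from \rf{T:rough 5-pt}, whose hypotheses are already phrased in terms of $5$-point limits and \hrcat(0) spaces. Since every $C$-\rcat(0) space is trivially $C$-\hrcat(0), the only genuine work is to recognise each of the three limit constructions as a $5$-point limit of an appropriate sequence of \hrcat(0) spaces; once that is done, \rf{T:rough 5-pt} applies essentially verbatim.

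First I would dispose of (a) and (b) together. Each $X_m$ is $C$-\rcat(0), hence $C$-\hrcat(0), so the constant sequence of defect parameters is bounded by $C$. I would then observe that a (pointed or unpointed) Gromov--Hausdorff limit, and likewise an ultralimit, of $(X_m,d_m)$ is in particular a $5$-point limit in the sense of the definition preceding \rf{T:rough 5-pt}: for any five points of the limit and any $\e>0$, the defining $\e$-approximations (in the GH case) or the representing sequences together with non-principality of the ultrafilter (in the ultralimit case) supply, for infinitely many $m$, points of $X_m$ whose ten pairwise distances approximate those in the limit to within $\e$. Since the limit $(X,d)$ is a length space by hypothesis, the first assertion of \rf{T:rough 5-pt} then yields that it is $\tC$-\rcat(0) with $\tC=3C+2\sqrt3$, which is exactly (a) and (b).

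For (c) I would instead use the second assertion of \rf{T:rough 5-pt}, applied to the rescaled copies of the single \rcat(0) space $X$. A direct check of the defining condition under scaling $d\mapsto d/m$ shows that if $X$ is $C$-\hrcat(0) then $(X,d/m)$ is $(C/m)$-\hrcat(0): an $h$-short triangle in the scaled metric is an $(mh)$-short triangle in the original, comparison triangles and comparison distances in $\rr^2$ scale by $1/m$, and dividing the original inequality $d(u,v)\le|\bu-\bv|+C$ by $m$ turns the defect $C$ into $C/m$ (here it is essential that $H$ is unconstrained in the \hrcat(0) condition, so the rescaled space keeps that property with $H$ replaced by $H/m$). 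The asymptotic cone $\text{Cone}_\omega X=\lim_\omega(X,d/m)$ is an ultralimit, hence a $5$-point limit of the sequence $(X,d/m)$, is again a length space, and is complete. Because $C/m\to0$, the second assertion of \rf{T:rough 5-pt} gives that the asymptotic cone is CAT(0).

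The one step requiring genuine care, and the one I expect to be the main obstacle, is verifying that the Gromov--Hausdorff limits and ultralimits satisfy the $5$-point limit definition uniformly in the five chosen points: matching the pointed and unpointed GH formulations to the ``infinitely many $m$'' quantifier, and confirming that an ultralimit of length spaces is again a complete length space, so that the CAT(0) conclusion in (c) is legitimate. Everything else is bookkeeping, namely the scaling identity for the \hrcat(0) constant and the substitution of the bounded sequence (for (a), (b)) or the null sequence $C/m\to0$ (for (c)) into \rf{T:rough 5-pt}.
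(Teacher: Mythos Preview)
Your proposal is correct and follows essentially the same route as the paper, which likewise deduces all three parts as routine consequences of \rf{T:rough 5-pt} after observing that Gromov--Hausdorff limits and ultralimits are $5$-point limits, and that rescaling by $1/m$ turns a $C$-\hrcat(0) space into a $(C/m)$-\hrcat(0) space. The paper also singles out exactly the point you flag as needing care in (c), namely completeness of the asymptotic cone (it cites \cite[I.5.53]{bh} for ultralimits always being complete).
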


Note that in the proof of \rf{C:limit}(c), we need the fact that
$\text{Cone}_\omega X$ is complete, but this is true because ultralimits are
always complete \cite[I.5.53]{bh}.

In each part of \rf{C:limit}, the existence of an approximate midpoint for
arbitrary $x,y\in X$ (meaning a point $z$ such that $d(x,z)\maxo d(y,z)\le
\e+d(x,y)/2$ for fixed but arbitrary $\e>0$) follows easily from the
hypotheses, and so $(X,d)$ is easily seen to be a length space if it is
complete. Thus \rf{C:limit} generalizes the $\k=0$ case of
\cite[II.3.10]{bh}(1), (2), where the spaces are assumed to be CAT(0) rather
than \rcat(0) and the limit space $(X,d)$ is assumed to be complete rather
than a length space.


\end{document}